\title{Convergence in distribution of the Bernstein-Durrmeyer kernel and pointwise convergence of a generalised operator for functions of bounded variation}
\author[1]{Mohammed Taariq Mowzer}
\ead{TaariqMowzer@gmail.com}
\address[1]{Department of Mathematics and Applied Mathematics, University of Cape Town}
\newtheorem{theorem}{Theorem}[section]
\newtheorem{lemma}[theorem]{Lemma}
\newtheorem{corollary}[theorem]{Corollary}
\newtheorem{definition}[theorem]{Definition}
\newcommand{\Beta}{Beta}
\newcommand{\N}{N}
\newcommand{\Bin}{Bin}
\newcommand*{\bigchi}{\mbox{\large$\chi$}}
\begin{document}
\begin{abstract}
We study the convergence of Bernstein type operators leading to two results. The first:  The kernel $K_n$ of the Bernstein-Durrmeyer operator at each point $x \in (0, 1)$ --- that is $K_n(x, t) dt$ --- once standardised converges to the normal distribution. The second result computes the pointwise limit of a generalised Bernstein-Durrmeyer operator applied to --- possibly discontinuous --- functions $f$ of bounded variation.
\end{abstract}
\begin{keyword}
Positive linear operators \sep Bernstein type operators \sep Beta distribution
\end{keyword}

\maketitle
\section{Introduction}
Bernstein operators $B_n(f)(x) =  \sum_{k = 0}^n p_{n,k}(x)f(k/n)$, where $p_{n,k}(t) = \binom{n}{k} t^k(1 - t)^{n - k}$, were introduced by S.Bernstein \cite{bernstein} to give a constructive proof of the Weierstrass approximation theorem. The Bernstein-Durrmeyer operator below was constructed by Durrmeyer \cite{durrmeyer} and independently by Lupa{\c{s}} \cite{lupas}:
\begin{definition}
    The Bernstein-Durrmeyer operator $M_n$ applied to an integrable function $f$ on $[0, 1]$ is given by
    \begin{equation*}
        M_n(f)(x) = \int_0^1 f(t)K_n(x, t) \, dt, \quad x \in [0, 1],
    \end{equation*}
    where $K_n$ is a Bernstein-Durrmeyer kernel given by
    \begin{equation*}
        K_n(x, t) = (n + 1) \sum_{k = 0}^n p_{n,k}(x)p_{n,k}(t), \quad x, t \in [0, 1].
    \end{equation*}
\end{definition}

In his original proof, Bernstein used the binomial distribution and the weak law of large numbers to constructively prove the Weierstrass approximation theorem. 

In this paper, we use the fact that the binomial distribution converges to the normal distribution as the number of trials increases to show that the kernel of the Bernstein-Durrmeyer operator tends to the normal distribution. To be precise:

\begin{theorem}\label{thrm:normal_convergence_kernel}
    For $x \in (0, 1)$ fixed
    \begin{equation*}
        \frac{1}{\sqrt{n}}K_n\left(x, \frac{t}{\sqrt{n}} + x\right) dt \text{ converges in distribution to } N(0, 2x(1- x)) \text{ as } n \rightarrow \infty,
    \end{equation*}
	where $\N(\mu, \sigma^2)$ is the normal distribution with mean $\mu$ and variance $\sigma^2$.
\end{theorem}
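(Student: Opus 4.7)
The plan is to interpret $K_n(x, t)\,dt$ as the law of a random variable $T_n$ obtained by composing the two natural distributions attached to the kernel. Since $(n+1) p_{n,k}(t)$ is the density of a $\text{Beta}(k+1, n-k+1)$ distribution on $[0,1]$, and $\int_0^1 p_{n,k}(t)\,dt = 1/(n+1)$ makes $K_n(x, \cdot)$ a probability density, the random variable $T_n$ can be constructed hierarchically: first sample $K \sim \text{Bin}(n, x)$ and then, conditionally, $T_n \mid K \sim \text{Beta}(K+1, n-K+1)$. The claim then reduces to showing $\sqrt{n}(T_n - x) \Rightarrow N(0, 2x(1-x))$.

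I would decompose this via the conditional mean $\mu_K := E[T_n \mid K] = (K+1)/(n+2)$:
\begin{equation*}
\sqrt{n}(T_n - x) \;=\; \underbrace{\sqrt{n}(T_n - \mu_K)}_{\text{Beta fluctuation}} \;+\; \underbrace{\sqrt{n}(\mu_K - x)}_{\text{Binomial fluctuation}}.
\end{equation*}
A direct calculation rewrites the second term as $\frac{n}{n+2}\cdot \frac{K - nx}{\sqrt{n}}$ plus a deterministic vanishing term, so de Moivre-Laplace yields convergence to $N(0, x(1-x))$. For the first term, $\mathrm{Var}(T_n \mid K) = (K+1)(n-K+1)/[(n+2)^2(n+3)] \sim x(1-x)/n$ on the typical event $\{|K - nx| \le n^{3/4}\}$, and on that event the $\text{Beta}(K+1, n-K+1)$ distribution has both parameters diverging with proportion $K/n \to x$, hence is asymptotically normal; conditionally, the Beta fluctuation also tends to $N(0, x(1-x))$. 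The variance $2x(1-x)$ is thus accounted for by two equal, essentially independent contributions.

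To combine them I would work with characteristic functions and the tower property:
\begin{equation*}
E[e^{is \sqrt{n}(T_n - x)}] \;=\; E\bigl[e^{is\sqrt{n}(\mu_K - x)} \cdot \psi_n(s, K)\bigr], \qquad \psi_n(s, K) := E[e^{is\sqrt{n}(T_n - \mu_K)} \mid K].
\end{equation*}
On the typical event $\psi_n(s, K) \to e^{-s^2 x(1-x)/2}$, while its complement has probability $o(1)$ by Hoeffding. Combined with $\sqrt{n}(\mu_K - x) \Rightarrow N(0, x(1-x))$, the product converges to $e^{-s^2 x(1-x)}$, the characteristic function of $N(0, 2x(1-x))$, and L\'evy's continuity theorem concludes.

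The main obstacle is the step $\psi_n(s, K) \to e^{-s^2 x(1-x)/2}$ \emph{uniformly} over $K$ in the typical range --- it is this uniformity that legitimises taking the limit inside the outer expectation. I would establish it either by a direct Stirling/Laplace-type asymptotic of the Beta density (a local limit theorem), or by exploiting the representation of $\text{Beta}(k+1, n-k+1)$ as the $(k+1)$-th order statistic of $n+1$ i.i.d.\ Uniform$[0,1]$ variables, whose asymptotic normality as $k, n-k \to \infty$ proportionally is a classical result.
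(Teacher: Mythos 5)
Your proposal is correct, and it starts from exactly the same key observation as the paper: $K_n(x,\cdot)\,dt$ is the law of a hierarchical sample ($K\sim\Bin(n,x)$, then $T_n\mid K\sim\Beta(K+1,n-K+1)$), so the theorem reduces to a CLT for this Binomial mixture of Betas, with each layer contributing a variance of $x(1-x)$. Where you diverge is in how the two contributions are combined. The paper works at the level of distribution functions: it writes $\mathbb{P}(\sqrt{n}(X_n-x)\le z)=\mathbb{E}(h_n(C_n^*))$ with $h_n(\alpha)=\mathbb{P}(\sqrt{n}(B_{n,xn+\alpha\sqrt{n}}-x)\le z)$, proves $h_n\to\Psi(z-\cdot)$ uniformly on compacts via a continuous-convergence lemma, and passes to the limit with a Skorokhod-plus-dominated-convergence theorem (its Theorem \ref{thrm:bounded_convergence}); you instead condition on $K$, split off the conditional mean $\mu_K=(K+1)/(n+2)$, and combine via characteristic functions, a truncation to the Hoeffding-typical event $\{|K-nx|\le n^{3/4}\}$, and L\'evy's continuity theorem. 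Both routes are sound, and both ultimately rest on the same nontrivial ingredient --- asymptotic normality of $\Beta(k+1,n-k+1)$ as $k,n-k\to\infty$ with $k/n\to x$, which the paper proves by a Stirling/Scheff\'e local limit argument (your first suggested route for establishing $\psi_n(s,K)\to e^{-s^2x(1-x)/2}$). The uniformity issue you flag as the main obstacle is real and is precisely what the paper's Lemma \ref{lem:compact_crit} is designed to dispatch; your sequence-based formulation (``for any $K_n$ in the typical range, $K_n/n\to x$, hence the conditional CLT applies'') closes it in essentially the same way. The trade-off: your characteristic-function argument is more classical and self-contained, while the paper's bounded-convergence machinery is reused verbatim for its second main theorem, which is why the authors package it as a standalone lemma.
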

Equivalently, if $X_n$ is a random variable with distribution $K_n(x, t)dt$ then
    \begin{equation*}
        \sqrt{n}(X_n - x) \text{ converges in distribution to } \N(0, 2x(1- x)) \text{ as } n \rightarrow \infty.
    \end{equation*}
    
The Bernstein-Durrmeyer operator can be generalised to arbitrary measure. Here, we limit ourselves to arbitrary weights. 
\begin{definition}
    The Bernstein-Durrmeyer operator with non-negative weight $w$  is given by
    \begin{equation}\label{eqn:def_berdurr_gen}
        M_{n, w}(f)(x) = \sum_{k = 0}^n p_{n,k}(x) \frac{\int_0^1 f(t) p_{n,k}(t) w(t) \, dt}{\int_0^1 p_{n,k}(t) w(t) \, dt}, 
    \end{equation}
    where $w$ is chosen so that the terms are well defined, $x \in [0, 1]$, and $f$ is a function on $[0, 1]$ integrable with respect to $w(t)dt$.
\end{definition}
In the particular case when $w$ is a Jacobi weight $w(t) = t^\alpha(1 - t)^\beta$ for $\alpha, \beta>-1$, the operator $M_{n,w}$ is well known. For example, Berens and Xu (e.g. \cite{berens}) investigated the Bernstein-Durrmeyer operator with Jacobi weights, and Ditzian (e.g. \cite{multidimensional}) studied the multidimensional Bernstein-Durrmeyer operator with Jacobi weights. The case of arbitrary measure --- replace $w(t)dt$ with $d \rho(t)$ in (\ref{eqn:def_berdurr_gen}) --- and its multivariate analogue was, to our knowledge, for the first time studied by Berdysheva and Jetter \cite{kurt}. Berdysheva \cite{BERDYSHEVA2014734} showed that the Bernstein-Durrmeyer operator with arbitrary measure when applied to $f$ at $x$ converges to $f(x)$ if $f$ is essentially bounded, continuous at $x$ and the support of $\rho$ contains $x$. 

Herzog and Hill proved in \cite{discontbernstein} that for $f$ defined on $\mathbb{Q}\cap [0, 1]$, $f$ bounded and having left and right limits $f(x-)$ and $f(x+)$ at a given $x \in (0, 1)$, for the classical Bernstein operator
$$
\lim_{n \rightarrow \infty} B_n(f)(x) = \frac{1}{2}f(x-) + \frac{1}{2}f(x+).
$$

Guo \cite{GUO1987183} showed when $f$ is of bounded variation that 
\begin{equation}\label{eqn:Well-known-disc}
\lim_{n \rightarrow \infty} M_n(f)(x) = \frac{1}{2}f(x-) + \frac{1}{2}f(x+)
\end{equation}
holds for the Bernstein-Durrmeyer operator and gave an estimate of the rate of convergence.

In this paper we find the limit of the  Bernstein-Durrmeyer operator with arbitrary weights when applied to a function $f$ of bounded variation. To be precise, we show:
\begin{theorem}\label{thrm:result_stated_twice_first}
Let $w$ be a non-negative function of bounded variation on $[0, 1]$. Let $x \in (0, 1)$ be fixed. Assume that either the left limit $w(x-)$ or the right limit $w(x+)$ at $x$ is non-zero. For any function $f$ of bounded variation on $[0, 1]$,
\begin{equation}\label{eqn:weighted_char}
    \lim_{n \rightarrow \infty}M_{n,w}(f)(x) = (1 - \nu)f(x-) + \nu f(x+),
\end{equation} where \begin{equation*}\nu =  \int_0^1 \frac{w(x+)(1 - u)}{w(x-)u + w(x+)(1 - u)} \, du.
\end{equation*}
\end{theorem}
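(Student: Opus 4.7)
The plan is to reduce the theorem, by separating out the jump of $f$ at $x$, to a single kernel computation. Setting $j := f(x+) - f(x-)$ and $g(t) := f(t) - j\,\chi_{(x,1]}(t)$ (and redefining $g(x) := f(x-)$, which does not affect the operator), one obtains a function $g$ of bounded variation with $g(x-)=g(x+)=g(x)=f(x-)$, hence continuous at $x$. Since $w$ is bounded and at least one of $w(x\pm)$ is non-zero, $x$ lies in the support of the measure $w(t)\,dt$, so Berdysheva's convergence theorem \cite{BERDYSHEVA2014734} applied to $d\rho = w\,dt$ yields $M_{n,w}(g)(x) \to g(x) = f(x-)$. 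By linearity,
\[
    M_{n,w}(f)(x) = M_{n,w}(g)(x) + j\,M_{n,w}(\chi_{(x,1]})(x),
\]
and since $(1-\nu)f(x-) + \nu f(x+) = f(x-) + \nu j$, the theorem reduces to proving $M_{n,w}(\chi_{(x,1]})(x) \to \nu$.

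For this kernel limit the strategy is a de~Moivre--Laplace / CLT asymptotic analysis of
\[
    M_{n,w}(\chi_{(x,1]})(x) = \sum_{k=0}^n p_{n,k}(x)\,\frac{\int_x^1 p_{n,k}(t) w(t)\,dt}{\int_0^1 p_{n,k}(t) w(t)\,dt}.
\]
I would parametrise $k = nx + s\sqrt{n}$; terms with $|s|$ large are negligible by the exponential decay of $p_{n,k}(x)$ in $|s|$, while for $k$ in the $O(\sqrt{n})$-window $(n+1)p_{n,k}(t)$ is the density of $\mathrm{Beta}(k+1, n-k+1)$, which concentrates at $k/n$ with variance $\sim x(1-x)/n$. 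Writing $\sigma := \sqrt{x(1-x)}$ and letting $G$ denote the standard normal CDF, the CLT gives $(n+1)\int_0^x p_{n,k}(t)\,dt \to G(-s/\sigma)$, and because $w$ is of bounded variation the one-sided limits $w(x\pm)$ can replace $w$ in integrals against $p_{n,k}$ with negligible error, producing
\[
    (n+1)\!\int_x^1\! p_{n,k}(t)w(t)\,dt \to w(x+)\bigl(1-G(-s/\sigma)\bigr), \qquad (n+1)\!\int_0^1\! p_{n,k}(t)w(t)\,dt \to w(x-)G(-s/\sigma) + w(x+)\bigl(1-G(-s/\sigma)\bigr).
\]
Combined with $\sqrt{n}\,p_{n,k}(x) \to \tfrac{1}{\sigma\sqrt{2\pi}}\exp(-s^2/(2\sigma^2))$, the sum over $k$ becomes a Riemann sum with spacing $\Delta s = 1/\sqrt{n}$ converging to
\[
    \int_{-\infty}^\infty \tfrac{1}{\sigma\sqrt{2\pi}}\exp\!\bigl(-s^2/(2\sigma^2)\bigr)\cdot\frac{w(x+)\bigl(1-G(-s/\sigma)\bigr)}{w(x-)G(-s/\sigma) + w(x+)\bigl(1-G(-s/\sigma)\bigr)}\,ds,
\]
and the substitution $u = G(-s/\sigma)$ sends this Gaussian measure onto Lebesgue measure on $[0,1]$ and produces precisely $\nu$.

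The main obstacle I anticipate is making the termwise asymptotics uniform enough in $k$ to justify the Riemann sum passage: (i) an exponentially small tail estimate to discard $k$ outside the $O(\sqrt{n})$-window; (ii) a lower bound keeping the denominator $\int_0^1 p_{n,k}(t)w(t)\,dt$ comparable to $1/(n+1)$ uniformly on that window, which is exactly where the hypothesis $w(x-)+w(x+) > 0$ is essential; and (iii) converting bounded variation of $w$ into effective error estimates of the form $\bigl|\int_{x-\delta}^x p_{n,k}(t)[w(t)-w(x-)]\,dt\bigr| \le \mathrm{Var}_{[x-\delta,x]}(w)/(n+1)$, with an analogous bound on the right. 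With these three ingredients in place, the CLT calculation and substitution above yield $M_{n,w}(\chi_{(x,1]})(x) \to \nu$, and the reduction in the first paragraph closes the proof.
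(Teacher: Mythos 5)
Your argument is sound, but it follows a genuinely different route from the paper. You split off the jump, $f = g + j\,\chi_{(x,1]}$ with $g$ continuous at $x$, dispose of $g$ by citing Berdysheva's theorem (whose hypotheses do hold here: $g$ is bounded, continuous at $x$ after the harmless redefinition $g(x)=f(x-)$, and $x\in\mathrm{supp}(w\,dt)$ precisely because one of $w(x\pm)$ is positive), and thereby reduce everything to the single limit $M_{n,w}(\chi_{(x,1]})(x)\to\nu$, which you compute by a local de Moivre--Laplace/Riemann-sum analysis. The paper instead treats a general $f$ of bounded variation in one pass: it writes $M_{n,w}(f)(x)=\mathbb{E}(c_n^*(C_n^*))$ with $C_n^*=\sqrt{n}(C_n/n-x)$, $C_n\sim\mathrm{Bin}(n,x)$, proves your ``inner'' asymptotics as Lemma~\ref{lem:incomplete_beta_approx} (i.e.\ $L_n(fw)(x+\alpha/\sqrt{n})\to f(x-)w(x-)\Psi(-\alpha)+f(x+)w(x+)(1-\Psi(-\alpha))$ uniformly on compacta, obtained by viewing $L_n$ as an expectation over a Beta random variable), and then replaces your Riemann-sum passage by Theorem~\ref{thrm:bounded_convergence} (Skorokhod representation plus dominated convergence), which packages in one stroke exactly the three uniformity issues you list: tail truncation, the uniform lower bound on the denominator over compact windows of $\alpha$, and the replacement of $w$ by its one-sided limits. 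Your route is conceptually economical --- it isolates the one genuinely new quantity $\nu$ and outsources the continuous case --- at the price of importing Berdysheva's theorem and carrying out local-limit estimates by hand; the paper's route is self-contained and never needs a local limit theorem, only convergence in distribution. One small slip to repair when you write out step (iii): the bound must use the variation of $w$ on the half-open interval $[x-\delta,x)$ rather than $[x-\delta,x]$, since the closed-interval variation contains the jump of $w$ at $x$ itself and need not tend to $0$ as $\delta\to 0$.
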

When $w(x-) = w(x+)$, $\nu$ evaluates to $\frac{1}{2}$. Otherwise,
\begin{equation} \nonumber
 \nu = \frac{r^2 - r[1 + \ln r]}{(r - 1)^2}, \quad \text{for } r = \frac{w(x+)}{w(x-)} \text{ when } r \in (0, \infty)\text{ and }r \neq 1.
\end{equation} 

Theorem \ref{thrm:result_stated_twice_first} is an extension of (\ref{eqn:Well-known-disc}) --- consider $w = 1$. In fact (\ref{eqn:Well-known-disc}) holds even if only $w(x+) = w(x-)$, that is, $w$ continuous at $x$.
\section{Prerequisites and useful results}
Henceforth, we denote the binomial distribution with $n$ trials and probability $p$ of success by $\Bin(n, p)$, the normal distribution with mean $\mu$ and variance $\sigma^2$ by $\N(\mu, \sigma^2)$, and the beta distribution with $\alpha$ and $\beta$ parameters by $\Beta(\alpha, \beta)$. The symbol $\stackrel{D}{\rightarrow}$ denotes convergence in distribution and $\sim$ denotes having the same distribution. We denote the probability measure by $\mathbb{P}$ and expectation of a random variable by $\mathbb{E}$.
	
The function $p_{n,k}$ is extended onto $\mathbb{R}$, and $k$ is allowed on $[0, n]$ so that
\begin{equation*}
    p_{n,k}(t) = 
    \begin{cases}\frac{\Gamma(n + 1)}{\Gamma(k + 1) \Gamma(n - k + 1)}t^k(1 - t)^{n - k} &\text{ if } t \in[0, 1],\\
    0 &\text{ if } t \not \in [0, 1].
    \end{cases}
\end{equation*} For a function $f$, $f(t-)$ and $f(t+)$ denote the left and right limit of $f$ at $t$, and $\mathcal{D}(f)$ is the set of discontinuities of $f$.  Usually $x$ will be fixed in $(0, 1)$. It is safe to define $\Psi$ as the cumulative distribution function (cdf) of $N(0, x(1 - x))$.
\begin{theorem}
\label{thrm:bin_normal_approx}
    Let $C_n$ have a $\Bin(n, p)$ distribution for fixed $p \in (0, 1)$. Then $$\sqrt{n}(C_n/n - p) \stackrel{D}{\rightarrow} N(0, p(1 - p)), \text{ as } n \rightarrow \infty.$$
\end{theorem}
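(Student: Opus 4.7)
The plan is to recognise the statement as the classical de Moivre--Laplace central limit theorem. First I would write $C_n$ as a sum $X_1 + \cdots + X_n$ of $n$ independent Bernoulli$(p)$ random variables, so that
\[
\sqrt{n}\bigl(C_n/n - p\bigr) \;=\; \frac{1}{\sqrt{n}}\sum_{i=1}^{n}(X_i - p).
\]
Each summand $X_i - p$ is bounded with mean $0$ and variance $p(1-p)$, so the Lindeberg--L\'evy CLT applies immediately and yields convergence in distribution to $N(0, p(1-p))$. In an approximation-theory paper this is arguably the cleanest route, since it reduces the claim to a named, quotable fact.

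If a more self-contained argument via characteristic functions is preferred, I would note that the characteristic function of $C_n$ is $\phi_{C_n}(t) = (1 - p + p e^{it})^n$, so for $Y_n := \sqrt{n}(C_n/n - p)$,
\[
\phi_{Y_n}(t) \;=\; e^{-itp\sqrt{n}}\bigl(1 - p + p e^{it/\sqrt{n}}\bigr)^n.
\]
Expanding $e^{it/\sqrt{n}} = 1 + it/\sqrt{n} - t^2/(2n) + O(n^{-3/2})$ and then $\log(1+z) = z - z^2/2 + O(z^3)$ inside the $n$th power, the leading $itp\sqrt{n}$ contribution from $n$ times the linear part cancels the outside factor $e^{-itp\sqrt{n}}$, and the next order converges to $-\tfrac{1}{2}p(1-p)t^2$. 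L\'evy's continuity theorem then concludes.

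The only technical subtlety lies in the bookkeeping of the $O(n^{-3/2})$ remainders: one must check both that the purely imaginary linear-in-$\sqrt{n}$ term cancels exactly, and that the coefficient of $t^2$ in the limit is $p(1-p)$ rather than $p$. The shift from $p$ to $p(1-p)$ comes entirely from the $-nz^2/2$ correction in the log expansion, which contributes $+\tfrac{1}{2}p^2 t^2$ in the limit. This is routine, and no deeper obstacle arises; the result is one of the oldest instances of the CLT and may well simply be cited in the paper rather than reproved.
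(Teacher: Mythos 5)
Your proposal is correct and matches the paper's treatment: the paper does not prove this statement but simply remarks that it ``is a direct consequence of the central limit theorem and is well known,'' which is exactly your first (Lindeberg--L\'evy) route applied to the Bernoulli decomposition $C_n = X_1 + \cdots + X_n$. The characteristic-function computation you sketch is a fine self-contained alternative but is not needed.
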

\begin{theorem}
\label{thrm:beta_normal_approx}
    Let $\beta_{r_1, r_2}$ have distribution $\Beta(r_1, r_2)$. Then 
    \begin{equation}\label{eqn:beta_nice_converge}
    \sqrt{r_1 + r_2}\left(\beta_{r_1,r_2} - \frac{r_1}{r_1 + r_2}\right) \stackrel{D}{\rightarrow} N(0, \gamma(1 - \gamma)),
    \end{equation} as $r_1, r_2 \rightarrow \infty$ and $\frac{r_1}{r_1 + r_2} \rightarrow \gamma$ for $\gamma \in (0, 1)$.
\end{theorem}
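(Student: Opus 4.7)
My plan is to exploit the representation of the Beta distribution as a ratio of independent Gammas, reducing the theorem to a one-parameter Gamma CLT combined with Slutsky's theorem. Let $X$ and $Y$ be independent with $X \sim \mathrm{Gamma}(r_1, 1)$ and $Y \sim \mathrm{Gamma}(r_2, 1)$; then $X/(X+Y) \sim \Beta(r_1, r_2)$, so it is enough to analyse the asymptotics of $X/(X+Y) - r_1/(r_1+r_2)$.

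The first step is to establish that $(X - r)/\sqrt{r} \stackrel{D}{\rightarrow} \N(0, 1)$ as $r \rightarrow \infty$ for $X \sim \mathrm{Gamma}(r, 1)$. For integer $r$ this is immediate from the classical CLT, since such $X$ is a sum of $r$ i.i.d.\ rate-$1$ exponentials; for general real $r$ I would verify it via the characteristic function
\begin{equation*}
\mathbb{E}\!\left[e^{it(X-r)/\sqrt{r}}\right] = \exp\!\bigl(-it\sqrt{r} - r\log(1 - it/\sqrt{r})\bigr)
\end{equation*}
and a Taylor expansion of the logarithm, which yields convergence to $e^{-t^2/2}$. Independence of $X$ and $Y$ then gives joint convergence of $(U_n, V_n) := ((X-r_1)/\sqrt{r_1}, (Y-r_2)/\sqrt{r_2})$ to a pair of independent standard normals $(Z_1, Z_2)$.

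The second step is algebraic. Writing $n = r_1 + r_2$, a direct expansion gives
\begin{equation*}
\sqrt{n}\!\left(\frac{X}{X+Y} - \frac{r_1}{n}\right)
= \frac{(r_2/n)\sqrt{r_1/n}\, U_n - (r_1/n)\sqrt{r_2/n}\, V_n}{1 + (\sqrt{r_1}\, U_n + \sqrt{r_2}\, V_n)/n}.
\end{equation*}
Under $r_1/n \rightarrow \gamma$ and $r_2/n \rightarrow 1-\gamma$ the denominator tends to $1$ in probability, and the numerator converges in distribution to $(1-\gamma)\sqrt{\gamma}\, Z_1 - \gamma\sqrt{1-\gamma}\, Z_2$ by the continuous mapping theorem. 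Slutsky's theorem then yields convergence of the ratio to the same limit, and a variance calculation gives $(1-\gamma)^2\gamma + \gamma^2(1-\gamma) = \gamma(1-\gamma)$, so the limit is $\N(0, \gamma(1-\gamma))$ as claimed.

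The main obstacle is not any single deep step but the careful bookkeeping of the three different scales $\sqrt{r_1}$, $\sqrt{r_2}$, and $\sqrt{n}$ in the algebraic expansion, together with the verification that the correction term $(\sqrt{r_1}\, U_n + \sqrt{r_2}\, V_n)/n$ in the denominator is indeed $O_{\mathbb{P}}(n^{-1/2})$ and hence negligible. Everything else is routine: the one-parameter Gamma CLT, independence, and Slutsky's theorem.
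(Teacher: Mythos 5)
Your proposal is correct, but it takes a genuinely different route from the paper. The paper works directly with the density: it writes down the pdf of $\sqrt{r_1+r_2}\bigl(\beta_{r_1,r_2} - \tfrac{r_1}{r_1+r_2}\bigr)$ via the transformation formula, shows that its logarithm converges pointwise to the log of the $\N(0,\gamma(1-\gamma))$ density using Stirling's approximation and a second-order Taylor expansion of $\ln(1+x)$, and then invokes Scheff\'e's theorem to upgrade pointwise convergence of densities to convergence in distribution. You instead use the representation $\beta_{r_1,r_2} \sim X/(X+Y)$ with $X \sim \mathrm{Gamma}(r_1,1)$, $Y \sim \mathrm{Gamma}(r_2,1)$ independent, reduce to the one-parameter Gamma central limit theorem (correctly handled for non-integer shape via the characteristic function), and finish with an exact algebraic identity plus Slutsky; I checked the identity
\begin{equation*}
\sqrt{n}\left(\frac{X}{X+Y} - \frac{r_1}{n}\right)
= \frac{(r_2/n)\sqrt{r_1/n}\, U_n - (r_1/n)\sqrt{r_2/n}\, V_n}{1 + (\sqrt{r_1}\, U_n + \sqrt{r_2}\, V_n)/n}
\end{equation*}
and the variance computation $(1-\gamma)^2\gamma + \gamma^2(1-\gamma) = \gamma(1-\gamma)$, and both are right; the denominator correction is indeed $O_{\mathbb{P}}(n^{-1/2})$ since $\sqrt{r_i}/n \leq n^{-1/2}$. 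The trade-off: your argument is shorter and avoids the Stirling bookkeeping entirely, at the cost of importing the Beta--Gamma representation and a joint-convergence step; the paper's argument is more computational but proves the strictly stronger statement that the \emph{densities} converge pointwise (a local limit theorem), which Scheff\'e then converts to the distributional statement actually needed.
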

Theorem \ref{thrm:bin_normal_approx} is a direct consequence of the central limit theorem and is well known. Theorem \ref{thrm:beta_normal_approx} is a generalisation of the well known approximation that the standardised $\Beta(an, bn)$ distribution tends to the normal distribution as $n \rightarrow \infty$. As far as we know, Theorem \ref{thrm:beta_normal_approx} is not well known. We supply a proof in the appendix. 

The Corollary below will be the most useful form of Theorem \ref{thrm:beta_normal_approx}.

\begin{corollary}\label{cor:beta_normal_approx}
 Suppose $B_{n, k}$ has a $\Beta(k + 1, n - k + 1)$ distribution, and let $x \in (0, 1)$ be fixed. Then
 \begin{equation}\label{eqn:beta_bad_converge}\sqrt{n}\left(B_{n, k} - x\right) \stackrel{D}{\rightarrow} N(\alpha, x(1 - x))\end{equation} as $n\rightarrow \infty$ and $\sqrt{n}\left(\frac{k}{n} - x\right) \rightarrow \alpha$. 
\end{corollary}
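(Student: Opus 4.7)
The plan is to reduce the corollary to Theorem~\ref{thrm:beta_normal_approx} by the obvious substitution $r_1 = k+1$, $r_2 = n-k+1$, and then absorb the small mismatches (the scaling $\sqrt{n+2}$ vs.\ $\sqrt{n}$, and the centring at $(k+1)/(n+2)$ vs.\ at $x$) using Slutsky's theorem. Nothing deeper should be needed.

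First I would verify the hypothesis of Theorem~\ref{thrm:beta_normal_approx}. The assumption $\sqrt{n}(k/n - x) \to \alpha$ forces $k/n \to x$, hence $\gamma := (k+1)/(n+2) \to x \in (0,1)$. Theorem~\ref{thrm:beta_normal_approx} applied with $r_1 = k+1$ and $r_2 = n-k+1$ (so $r_1 + r_2 = n+2$) then gives
\begin{equation*}
\sqrt{n+2}\left(B_{n,k} - \frac{k+1}{n+2}\right) \stackrel{D}{\rightarrow} N(0, x(1-x)).
\end{equation*}
Since $\sqrt{n}/\sqrt{n+2} \to 1$, multiplying by this deterministic factor and invoking Slutsky's theorem replaces $\sqrt{n+2}$ by $\sqrt{n}$ on the left without changing the limit.

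Next I would decompose
\begin{equation*}
\sqrt{n}(B_{n,k} - x) = \sqrt{n}\left(B_{n,k} - \tfrac{k+1}{n+2}\right) + \sqrt{n}\left(\tfrac{k+1}{n+2} - x\right).
\end{equation*}
A short algebraic check shows $\sqrt{n}\bigl(\tfrac{k+1}{n+2} - x\bigr) = \sqrt{n}(k/n - x) + \sqrt{n}\bigl(\tfrac{1-2k/n}{n+2}\bigr)$; the first summand tends to $\alpha$ by hypothesis and the second is $O(1/\sqrt{n})$, so the whole deterministic shift converges to $\alpha$. A second application of Slutsky's theorem then yields $\sqrt{n}(B_{n,k} - x) \stackrel{D}{\rightarrow} N(\alpha, x(1-x))$, which is the claim.

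There is no real obstacle here: the content of the corollary is entirely in Theorem~\ref{thrm:beta_normal_approx}, and the only care required is the bookkeeping of the elementary asymptotics relating $(k+1)/(n+2)$ to $k/n$, and to $x$, at the $1/\sqrt{n}$ scale.
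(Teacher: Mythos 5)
Your proposal is correct and follows essentially the same route as the paper: both reduce to Theorem~\ref{thrm:beta_normal_approx} with $r_1 = k+1$, $r_2 = n-k+1$ and then use Slutsky's theorem to absorb the deterministic rescaling and recentring. The only cosmetic difference is that the paper first recentres at $k/n$ and then shifts to $x$, whereas you shift from $(k+1)/(n+2)$ directly to $x$; the bookkeeping is equivalent.
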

\begin{proof}
    Notice $\sqrt{n}\left(\frac{k}{n} - x\right) \rightarrow \alpha$ implies $\frac{k}{n} \rightarrow x$. By Theorem \ref{thrm:beta_normal_approx},
    \begin{equation*}
    {\overbrace{\frac{\sqrt{n + 2}}{\sqrt{n}}}^M}\left[\sqrt{n}\left(B_{n,k} - \frac{k}{n}  \right) \right] + {\overbrace{\sqrt{n + 2}\left(\frac{k}{n} - \frac{k + 1}{n + 2}\right)}^C} = \sqrt{n + 2}\left(B_{n, k} - \frac{k + 1}{n + 2}\right) \stackrel{D}{\rightarrow} N(0, x(1 - x))
    \end{equation*}
  as $n \rightarrow \infty$ and $\frac{k}{n} \rightarrow x$ . Notice $M \rightarrow 1$ and $C \rightarrow 0$ in the limit. Therefore, by Slutsky's Theorem, $\sqrt{n}\left(B_{n,k} - \frac{k}{n}\right) \stackrel{D}{\rightarrow} N(0, x(1 - x))$. Another application of Slutsky's Theorem gives
  $$
  \sqrt{n}(B_{n,k} - x) = \sqrt{n}\left(B_{n,k} - \frac{k}{n}\right) + \sqrt{n}\left(\frac{k}{n} - x\right) \stackrel{D}{\rightarrow} N(\alpha, x(1 - x))
  $$
  as $n \rightarrow \infty$ and $\sqrt{n}\left(\frac{k}{n} - x\right) \rightarrow \alpha$.
\end{proof}

The following gives a straightforward way to show uniform convergence on compact intervals using sequences.

\begin{lemma}\label{lem:compact_crit}\cite[Chapter~3]{remmert1991theory}.
    Let $f$ and $(f_n)$ be functions defined on a compact subset $K$ of $\mathbb{R}$. The following are equivalent
    \begin{enumerate}[label=\roman*)]
        \item $(f_n)$ converges uniformly to $f$, and $f$ is continuous on $K$.
        \item For any sequence $(\alpha_n) \subset K$, if $\lim_{n \rightarrow \infty} \alpha_n = \alpha$  then $ \lim_{n \rightarrow\infty} f_n(\alpha_n) = f(\alpha)$.
    \end{enumerate}
\end{lemma}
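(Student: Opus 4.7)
The plan is to prove the two implications separately. Direction (i) $\Rightarrow$ (ii) is straightforward: for any sequence $\alpha_n \to \alpha$ in $K$, I would split
\begin{equation*}
|f_n(\alpha_n) - f(\alpha)| \le |f_n(\alpha_n) - f(\alpha_n)| + |f(\alpha_n) - f(\alpha)|.
\end{equation*}
The first summand is dominated by $\sup_{x \in K}|f_n(x) - f(x)|$, which vanishes by uniform convergence, and the second vanishes by continuity of $f$ at $\alpha$.

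For (ii) $\Rightarrow$ (i) I would proceed in two steps. First, taking $\alpha_n = \alpha$ constant in (ii) yields pointwise convergence $f_n(\alpha) \to f(\alpha)$ for every $\alpha \in K$. Next I would establish continuity of $f$ by contradiction: if $f$ fails to be continuous at some $\alpha \in K$, there exist $\varepsilon > 0$ and $\beta_m \to \alpha$ with $|f(\beta_m) - f(\alpha)| > \varepsilon$. Using pointwise convergence at each $\beta_m$, I would pick a strictly increasing sequence $n_m$ with $|f_{n_m}(\beta_m) - f(\beta_m)| < \varepsilon/2$, and then splice together a sequence $(\gamma_k) \subset K$ by setting $\gamma_{n_m} = \beta_m$ and $\gamma_k = \alpha$ for all other indices. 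Then $\gamma_k \to \alpha$, so (ii) forces $f_k(\gamma_k) \to f(\alpha)$; but along the subsequence $k = n_m$ the triangle inequality gives $|f_{n_m}(\beta_m) - f(\alpha)| > \varepsilon/2$, a contradiction.

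Finally, I would show uniform convergence by a similar contradiction. If $f_n$ does not converge uniformly to $f$, there exist $\varepsilon > 0$, a subsequence $n_j$, and $x_{n_j} \in K$ with $|f_{n_j}(x_{n_j}) - f(x_{n_j})| > \varepsilon$. Compactness of $K$ lets me pass to a further subsequence along which $x_{n_j} \to x^* \in K$. Splicing again, define $(\gamma_k) \subset K$ by $\gamma_{n_j} = x_{n_j}$ and $\gamma_k = x^*$ otherwise, so that $\gamma_k \to x^*$. Condition (ii) gives $f_k(\gamma_k) \to f(x^*)$, while continuity of $f$ (just proved) gives $f(\gamma_k) \to f(x^*)$, so $|f_k(\gamma_k) - f(\gamma_k)| \to 0$; this contradicts the $>\varepsilon$ bound along $k = n_j$.

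The main obstacle (and the only trick) is the splicing construction producing a single convergent sequence in $K$ that witnesses the contradiction in both parts of (ii) $\Rightarrow$ (i); once that device is in hand the rest is an $\varepsilon/2$-argument together with a Bolzano-Weierstrass extraction, which crucially uses compactness of $K$.
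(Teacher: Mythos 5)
Your proof is correct and complete. Note that the paper itself supplies no argument for this lemma --- its ``proof'' is only a pointer to Remmert's treatment of continuous convergence --- so your write-up is a genuine addition rather than a rederivation of something in the text. The argument you give is the standard one for the equivalence of continuous convergence with uniform convergence plus continuity of the limit on a compact set: the direction (i) $\Rightarrow$ (ii) is the usual $\varepsilon/2$ split, and the converse rests on the splicing device, which is exactly the right tool because hypothesis (ii) is stated for full sequences and therefore cannot be applied directly to a subsequence $\bigl(f_{n_j}(x_{n_j})\bigr)_j$ without first extending it to a convergent sequence indexed by all of $\mathbb{N}$. The two points that require care are both handled: the spliced sequence $(\gamma_k)$ really does converge, since the exceptional indices are strictly increasing and the exceptional values themselves converge to the target point; and the Bolzano--Weierstrass extraction together with the closedness of $K$ guarantees that the cluster point $x^\ast$ lies in $K$, so that (ii) is applicable there. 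The only (purely cosmetic) simplification available is in the final step: once $f_k(\gamma_k) \rightarrow f(x^\ast)$ and $f(x_{n_j}) \rightarrow f(x^\ast)$ are in hand, one can contradict the lower bound $|f_{n_j}(x_{n_j}) - f(x_{n_j})| > \varepsilon$ directly along $k = n_j$ without phrasing it as $|f_k(\gamma_k) - f(\gamma_k)| \rightarrow 0$ for all $k$, since $f(\gamma_k) \rightarrow f(x^\ast)$ only needs to be invoked on the exceptional indices.
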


\begin{proof}
    See for example \cite[Chapter~3]{remmert1991theory} section on continuous convergence.
\end{proof}

The next Theorem allows us to use convergence in distribution to show convergence in expectation.

\begin{theorem}\label{thrm:bounded_convergence}
Let $(X_n)$ be a sequence of real-valued random variables which converge to the random variable $X$ in distribution. Let $(h_n)$ be a sequence of uniformly bounded measurable functions from $\mathbb{R}$ to $\mathbb{R}$ which converge uniformly on compact intervals to a measurable function $h$ whose set of discontinuities $\mathcal{D}(h)$ is closed and satisfies $\mathbb{P}(X \not \in \mathcal{D}(h)) = 1$. Then 
$$
\lim_{n \rightarrow \infty} \mathbb{E}(h_n(X_n)) = \mathbb{E}(h(X)).
$$
\end{theorem}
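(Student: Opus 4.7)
The plan is to reduce the claim to the bounded convergence theorem via Skorokhod's representation theorem. Since $X_n \stackrel{D}{\rightarrow} X$, there exist random variables $Y_n \sim X_n$ and $Y \sim X$ on a common probability space with $Y_n \rightarrow Y$ almost surely. Because expectations depend only on the distribution, it suffices to show $\mathbb{E}(h_n(Y_n)) \rightarrow \mathbb{E}(h(Y))$.

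Next, I would establish that $h_n(Y_n) \rightarrow h(Y)$ almost surely. The event $E := \{Y_n \rightarrow Y\} \cap \{Y \not\in \mathcal{D}(h)\}$ has probability one: the first factor holds by construction, and the second because $Y \sim X$ and $\mathcal{D}(h)$ is closed (hence Borel measurable), giving $\mathbb{P}(Y \in \mathcal{D}(h)) = \mathbb{P}(X \in \mathcal{D}(h)) = 0$. For $\omega \in E$, the convergent sequence $(Y_n(\omega))$ lies, together with its limit $Y(\omega)$, in some compact interval $K$. Writing
\begin{equation*}
|h_n(Y_n(\omega)) - h(Y(\omega))| \le \sup_{t \in K}|h_n(t) - h(t)| + |h(Y_n(\omega)) - h(Y(\omega))|,
\end{equation*}
the first term tends to zero by uniform convergence of $h_n$ to $h$ on $K$, while the second does so by continuity of $h$ at $Y(\omega)$, which holds precisely because $Y(\omega) \not\in \mathcal{D}(h)$.

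Finally, the uniform bound on $(h_n)$ passes to the pointwise limit to bound $h$ as well, so the bounded convergence theorem delivers $\mathbb{E}(h_n(Y_n)) \rightarrow \mathbb{E}(h(Y))$, which is the desired conclusion.

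I expect the main friction here to be routine bookkeeping rather than anything conceptual: one must verify that the compact $K$ in the pointwise argument depends only on the already-fixed $\omega$ (so that the uniform-convergence hypothesis may be invoked at that $K$) and that the discontinuity event genuinely has probability zero on the new probability space. A direct alternative avoiding Skorokhod would combine the Portmanteau theorem --- which yields $\mathbb{E}(h(X_n)) \rightarrow \mathbb{E}(h(X))$ for bounded $h$ satisfying $\mathbb{P}(X \in \mathcal{D}(h)) = 0$ --- with tightness of $(X_n)$ to handle the difference $\mathbb{E}(h_n(X_n)) - \mathbb{E}(h(X_n))$: outside a compact set of mass $\epsilon$ the integrand is bounded by $2M\epsilon$, while inside, $\sup|h_n - h|$ tends to zero.
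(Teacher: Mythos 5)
Your proposal is correct and follows essentially the same route as the paper: Skorokhod's representation theorem, almost sure convergence of $h_n(Y_n)$ to $h(Y)$, and dominated (bounded) convergence. The only cosmetic difference is that you establish the pointwise convergence by a direct triangle-inequality estimate using continuity of $h$ at the single point $Y(\omega)$, whereas the paper invokes its continuous-convergence lemma on a compact neighbourhood inside the open set $\mathcal{D}(h)^c$ (which is where the closedness hypothesis enters); both arguments are valid.
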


\begin{proof}
    By Skorokhod's representation theorem, see for example \cite[Chapter~3.5]{Erhan}, there exists a sequence of random variables $(Y_n)$ such that each $Y_n$ has the same distribution as $X_n$, and $(Y_n)$ converges almost surely to a random variable $Y$ having the same distribution as $X$. Let $\Omega_0$ be the almost sure set where $(Y_n)$ converges pointwise to $Y$ and let $\Omega_1$ be the almost sure set $\{Y \not \in \mathcal{D}(h)\}$. For $\omega$ in the almost sure set $\Omega_0 \cap \Omega_1$, $Y(\omega)$ is in the open set $\mathcal{D}(h)^c$. Hence, $h$ is continuous on a compact neighbourhood of $Y(\omega)$. By Lemma~\ref{lem:compact_crit}, uniform convergence of $h_n$ to $h$ on the compact neighbourhood yields $h_n(Y_n(\omega)) \rightarrow h(Y(\omega))$ as $n \rightarrow \infty$. Therefore, $h_n(Y_n) \rightarrow h(Y)$ almost surely. Since $(h_n)$ is uniformly bounded, the sequence $(h_n(Y_n))$ is dominated by some constant. By Lebesgue's dominated convergence theorem, $$\lim_{n \rightarrow \infty} \mathbb{E}(h_n(X_n)) = \lim_{n \rightarrow \infty} \mathbb{E}(h_n(Y_n)) = \mathbb{E}(h(Y)) = \mathbb{E}(h(X)).$$
\end{proof}

\section{Normality of Bernstein-Durrmeyer kernel}

For the rest of this section $x \in (0, 1)$ is fixed. We construct a probabilistic representation of the Bernstein-Durrmeyer kernel $K_n(x, \cdot)$. Consider the distribution generated from the samples of the following process.

\begin{enumerate}[label = Step \arabic*., leftmargin=3\parindent]
\item Obtain a coin with probability $x$ of obtaining heads.
\item Flip the coin $n$ times recording the number of heads.
\item If $k$ heads were recorded, the sample is randomly drawn from a $\Beta(k + 1, n - k + 1)$ distribution.
\end{enumerate}
Suppose $C_n\sim \Bin(n, x)$ and $B_{n,k}~\sim~\Beta(k~+~1,~n~-~k~+~1)$ form an independency of random variables. The random variable 
\begin{equation*}
    X_n = \sum_{k = 0}^n \bigchi_{\{C_n = k\}}B_{n, k}
\end{equation*}
describes samples from the above process. Here, $\bigchi_F$ is the indicator function on a set $F$ that is
$$\bigchi_F(x) = \begin{cases} 1 & \text{ if } x \in F, \\
0 & \text{ if } x \not \in F. \end{cases}
$$
We show $X_n$ has distribution $K_n(x, \cdot)dt$. First, we note the probability density function (pdf) of $B_{n,k}$ is $$f_{B_{n,k}}(t) = \frac{\Gamma(n + 2)}{\Gamma(k + 1)\Gamma(n - k + 1)}t^k(1 - t)^{n - k} = (n + 1)p_{n,k}(t) \quad \text{for } t \in[0, 1]$$
and the probability mass function (pmf) of $C_n$ is
$$
p_{C_n}(k) = \binom{n}{k} x^k(1- x)^{n - k} = p_{n,k}(x) \quad \text{ for } k \in \{0,\ldots ,n\}. 
$$
Now, for all $z \in \mathbb{R}$,
\begin{align}
    \mathbb{P}(X_n \leq z) &= \mathbb{E}[\mathbb{P}(X_n \leq z\, |\, C_n = k)]\\
    &= \mathbb{E}[\mathbb{P}(B_{n,k} \leq z \,|\, C_n = k)] \\
    &= \sum_{k = 0}^n p_{n,k}(x)\mathbb{P}(B_{n,k} \leq z)\\
    &= \sum_{k = 0}^n p_{n,k}(x)\int_0^z (n + 1)p_{n,k}(t) \, dt \\
    &= \int_0^z K_n(x, t) \, dt.
\end{align}
Independence of $C_n$ and $B_{n,k}$ is used in the third equality. Hence, $X_n$ has distribution $K_n(x, t)dt$.

Now, the transformation formula says that the distribution of $\sqrt{n}(X_n - x)$ is 
$$
\frac{1}{\sqrt{n}}K_n\left(x, \frac{t}{\sqrt{n}} + x\right)dt.
$$
Therefore we can prove Theorem \ref{thrm:normal_convergence_kernel} by showing the equivalent statement:

\begin{theorem}\label{thrm:X_goes_normal}
    Let $x \in (0, 1)$ be fixed. Suppose $X_n$ is defined as above. Then $$\sqrt{n}(X_n - x) \stackrel{D}{\rightarrow} N(0, 2x(1 - x)).$$
\end{theorem}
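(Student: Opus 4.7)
The approach is to combine Corollary~\ref{cor:beta_normal_approx} (which handles the Beta step of the process for each fixed $k$) with Theorem~\ref{thrm:bin_normal_approx} using Theorem~\ref{thrm:bounded_convergence} as the bridge. Heuristically the two stages contribute independent $N(0, x(1-x))$ pieces whose sum is $N(0, 2x(1-x))$.

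Fix a bounded continuous test function $h \colon \mathbb{R} \rightarrow \mathbb{R}$. By independence of $C_n$ and $(B_{n,k})_k$,
\begin{equation*}
\mathbb{E}[h(\sqrt{n}(X_n - x))] = \sum_{k=0}^n p_{n,k}(x)\,\mathbb{E}[h(\sqrt{n}(B_{n,k} - x))].
\end{equation*}
Let $U_n = \sqrt{n}(C_n/n - x)$, which by Theorem~\ref{thrm:bin_normal_approx} satisfies $U_n \stackrel{D}{\rightarrow} \widetilde U \sim N(0, x(1-x))$. The plan is to view the right-hand side as $\mathbb{E}[h_n(U_n)]$, where $h_n \colon \mathbb{R} \rightarrow \mathbb{R}$ is defined by $h_n(\alpha) = \mathbb{E}[h(\sqrt{n}(B_{n, k(n, \alpha)} - x))]$ with $k(n,\alpha)$ the nearest integer to $nx + \sqrt{n}\alpha$ clamped into $\{0, \ldots, n\}$. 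The rounding ensures $h_n(\sqrt{n}(k/n - x)) = \mathbb{E}[h(\sqrt{n}(B_{n,k} - x))]$ for each admissible $k$, so $\mathbb{E}[h_n(U_n)] = \mathbb{E}[h(\sqrt{n}(X_n - x))]$, and $\|h_n\|_\infty \leq \|h\|_\infty$.

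The candidate limit is $H(\alpha) = \mathbb{E}[h(\alpha + \sqrt{x(1-x)}\,Z)]$ with $Z \sim N(0,1)$, which is continuous by dominated convergence. To verify uniform convergence of $h_n$ to $H$ on compact intervals via the sequential criterion of Lemma~\ref{lem:compact_crit}, I take $\alpha_n \rightarrow \alpha$ inside a compact interval. For large $n$ the rounding avoids clamping and gives $\sqrt{n}(k(n, \alpha_n)/n - x) \rightarrow \alpha$, so Corollary~\ref{cor:beta_normal_approx} yields $\sqrt{n}(B_{n, k(n, \alpha_n)} - x) \stackrel{D}{\rightarrow} N(\alpha, x(1-x))$, which boundedness and continuity of $h$ upgrade to $h_n(\alpha_n) \rightarrow H(\alpha)$. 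Applying Theorem~\ref{thrm:bounded_convergence} to $U_n \stackrel{D}{\rightarrow} \widetilde U$ with $(h_n, H)$ then produces
\begin{equation*}
\mathbb{E}[h(\sqrt{n}(X_n - x))] = \mathbb{E}[h_n(U_n)] \longrightarrow \mathbb{E}[H(\widetilde U)] = \mathbb{E}[h(\widetilde U + \sqrt{x(1-x)}\,Z)],
\end{equation*}
where $Z$ is taken independent of $\widetilde U$. Since $\widetilde U + \sqrt{x(1-x)}\,Z \sim N(0, 2x(1-x))$ and $h$ was arbitrary, the conclusion follows.

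The main obstacle is the technical bookkeeping around the discrete parameter $k$: the naturally discrete quantity $k \mapsto \mathbb{E}[h(\sqrt{n}(B_{n,k} - x))]$ must be extended to a function on $\mathbb{R}$ that is simultaneously uniformly bounded and sequentially convergent in the sense required by Lemma~\ref{lem:compact_crit}, so that Theorem~\ref{thrm:bounded_convergence} applies cleanly. Once the extension is chosen sensibly the remainder is just Corollary~\ref{cor:beta_normal_approx}, Lemma~\ref{lem:compact_crit}, Theorem~\ref{thrm:bounded_convergence}, and a standard Gaussian convolution identity.
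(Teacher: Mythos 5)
Your proof is correct and follows essentially the same route as the paper: condition on the binomial stage, define $h_n(\alpha)$ as the conditional quantity attached to $k = nx + \alpha\sqrt{n}$, establish uniform convergence on compacts via Corollary~\ref{cor:beta_normal_approx} and Lemma~\ref{lem:compact_crit}, and conclude with Theorem~\ref{thrm:bounded_convergence} applied to $C_n^* \stackrel{D}{\rightarrow} N(0, x(1-x))$. The only cosmetic differences are that the paper tests against indicators $\chi_{(-\infty, z]}$ (working directly with CDFs, whose limit $\Psi(z - \alpha)$ is likewise continuous in $\alpha$) and handles non-integer $k$ through the Gamma-function extension of $p_{n,k}$ rather than nearest-integer rounding.
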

Both the binomial and beta distribution, when standardised, converge to the normal distribution. It makes sense that $X_n$ --- which is a mixture of a binomial distribution with beta distributions --- when standardised, also converges to the normal distribution. Indeed, Corollary \ref{cor:beta_normal_approx} says $\sqrt{n}(B_{n, k} - x)$ is approximately $N(\alpha, x(1 - x))$ where $\alpha = \sqrt{n}\left(k/n - x\right)$ for large $n$ and $k$. In the random process $X_n$, $k$ equals $C_n \sim \Bin(nx, nx(1 - x))$. Now $\alpha$ is the random variable $C^*_n = \sqrt{n}\left(C_n/n - x\right)$ which is approximately $\N(0, x(1 - x))$ for large $n$. So, intuitively, for large enough $n$
$$
\sqrt{n}(X_n - x) \sim \N(\alpha, x(1 - x)) = \alpha + \N(0, x(1 - x)) = \N(0, x(1 - x)) + \N(0, x(1 - x)) = N(0, 2x(1 - x)).
$$
Most of the challenge of the proof comes from the fact that as $|\alpha|$ increases much larger values of $n$ are required for the approximation $\sqrt{n}(B_{n, k} - x) \sim N(\alpha, x(1 - x))$ to hold. However, it turns out that large $|\alpha|$ is improbable since $\alpha$ is approximately $\N(0, x(1 - x))$. 
\begin{proof}[Proof of Theorem \ref{thrm:X_goes_normal}]
Let $z \in \mathbb{R}$ be fixed. Let  $C_n^* = \sqrt{n}(C_n/n - x)$. Then
\begin{align}
\mathbb{P}(\sqrt{n}(X_n - x) \leq z) &= \mathbb{E}[\mathbb{P}(\sqrt{n}(B_{n,k} - x) \leq z \, | \, C_n = k)] \nonumber\\
&= \mathbb{E}[\mathbb{P}(\sqrt{n}(B_{n,xn + \alpha \sqrt{n}} - x) \leq z \, | \, \sqrt{n}(C_n/n - x) = \alpha)] \nonumber\\
\label{al:hnCn}&= \mathbb{E}[\mathbb{P}(\sqrt{n}(B_{n,xn + \alpha \sqrt{n}} - x) \leq z \,| \, C_n^* = \alpha)] = \mathbb{E}(h_n(C_n^*)),
\end{align}
where the last equality follows from the independence of $C_n$ and $B_{n, k}$, and
$$
h_n(\alpha) = 
\begin{cases}
    \mathbb{P}(\sqrt{n}(B_{n, xn + \alpha \sqrt{n}} - x) \leq z) & \text{ if } xn + \alpha\sqrt{n} \in [0, n],\\
    0 & \text{ otherwise.}
\end{cases}
$$

Let $K$ be any non-empty compact subset of $\mathbb{R}$. Let $(\alpha_n) \subset K$ such that $\lim_{n \rightarrow \infty} \alpha_n = \alpha$. Then $\lim_{n \rightarrow \infty}\sqrt{n}\left(\frac{xn + \alpha_n \sqrt{n}}{n} - x\right) = \alpha$. Corollary \ref{cor:beta_normal_approx} then gives $\sqrt{n}(B_{n, xn + \alpha_n \sqrt{n}} - x) \stackrel{D}{\rightarrow} \N(\alpha, x(1 - x))$ as $n \rightarrow \infty$. Let $A$ be a random variable with distribution $N(\alpha, x(1 - x))$ and recall $\Psi$ is the cdf of $\N(0, x(1 - x))$. Convergence in distribution yields,
\begin{equation}\label{eqn:F(z-alpha)}
\lim_{n \rightarrow \infty} h_n(\alpha_n) = \lim_{n \rightarrow \infty}  \mathbb{P}(\sqrt{n}(B_{n, xn + \alpha_n \sqrt{n}} - x) \leq z) = \mathbb{P}(A \leq z) = \Psi(z - \alpha).
\end{equation}
Lemma \ref{lem:compact_crit} gives that $h_n$ converges uniformly to $\Psi(\cdot - \alpha)$ on $K$. Since $K$ was arbitrary, uniform convergence holds on all compact intervals.

Let $U$ and $V$ be independent random variables with distribution $\N(0, x(1 - x)).$ By Theorem \ref{thrm:bin_normal_approx}, $C_n^* \stackrel{D}{\rightarrow} V$ as $n \rightarrow \infty$ . Equations (\ref{al:hnCn}), (\ref{eqn:F(z-alpha)}) and Theorem \ref{thrm:bounded_convergence} give
\begin{align*}
\lim_{n \rightarrow \infty} \mathbb{P}(\sqrt{n}(X_n - x) \leq z) = \lim_{n \rightarrow \infty} \mathbb{E}(h_n(C_n^*)) = \mathbb{E}(\Psi(z - V)) &= \mathbb{E}(\mathbb{P}(U \leq z - V\,|\, V))\\ &= \mathbb{P}(U \leq z - V) = \mathbb{P}(U + V \leq z).
\end{align*}
Since $U$ and $V$ are independent, the sum $U + V$ has distribution $N(0, 2x(1 - x))$. Hence, the cdf of $\sqrt{n}(X_n - x)$ converges pointwise to the cdf of $N(0, 2x(1 - x))$.
\end{proof}
\section{Limit of the generalised Bernstein-Durrmeyer operator}
We introduce the Lupa\c{s} Beta operator \cite{lupas} for integrable functions $f$ on $[0, 1]$
$$
L_n(f)(y) = \int_0^1 f(\theta)\frac{\theta^{ny}(1 - \theta)^{n(1 - y)}}{\mathcal{B}(ny + 1, n(1 - y) + 1)}\, d\theta,
$$
where $\mathcal{B}$ is the Beta function. Notice that the kernel of the Lupa\c{s} Beta operator is the $\Beta(ny + 1, n(1 - y) + 1)$ distribution. Specifically, for $B_{n, k} \sim \Beta(k + 1, n - k + 1)$, $0 \leq k \leq n$, we have that
\begin{equation}\label{eqn:Lupas}
L_n(f)(k/n) = \mathbb{E}(f(B_{n, k})).
\end{equation}

For the rest of this section $x \in (0, 1)$ is fixed. We can express the generalised Bernstein-Durrmeyer operator as
$$
M_{n, w}(f)(x) = \sum_{k = 0}^n p_{n,k}(x) \frac{\int_0^1 f(t) p_{n,k}(t) w(t) \, dt}{\int_0^1 p_{n,k}(t) w(t) \, dt} = \sum_{k = 0}^n p_{n,k}(x)\frac{L_n(f w)(k/n)}{L_n(w)(k/n)}.
$$

Using (\ref{eqn:Lupas}),  the Lupa\c{s} Beta operators can be represented as an expected value of a beta distribution. The summation can be represented as the expected value of a binomial distribution. Using these probabilistic representations we can decompose the proof of Theorem \ref{thrm:result_stated_twice_first} into first evaluating the Lupa\c{s} Beta operator, then evaluating the binomial part. In both cases, Theorem \ref{thrm:bounded_convergence} will be used to evaluate the limit.

\begin{lemma}\label{lem:incomplete_beta_approx}
Let $f:[0, 1] \rightarrow \mathbb{R}$ be of bounded variation. Let $k_n(\alpha) = xn + \alpha \sqrt{n}$. The limit \begin{equation} \label{eqn:cnk_to_G}
\lim_{n \rightarrow \infty} L_n(f)(k_n(\alpha)/n) = \Psi(-\alpha)f(x-) + [1 - \Psi(-\alpha)]f(x+),
\end{equation} holds uniformly in $\alpha$ on all compact subsets of $\mathbb{R}$.
\end{lemma}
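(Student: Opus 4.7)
The plan is to use (\ref{eqn:Lupas}) to rewrite $L_n(f)(k_n(\alpha)/n) = \mathbb{E}(f(B_{n, k_n(\alpha)}))$ and then to compute this expectation via Theorem \ref{thrm:bounded_convergence}, with the convergence in distribution supplied by Corollary \ref{cor:beta_normal_approx}. Concretely, for an arbitrary non-empty compact $K \subset \mathbb{R}$ and any sequence $(\alpha_n) \subset K$ with $\alpha_n \to \alpha \in K$, Corollary \ref{cor:beta_normal_approx} yields $X_n := \sqrt{n}(B_{n, k_n(\alpha_n)} - x) \stackrel{D}{\to} Y_\alpha$, where $Y_\alpha \sim \N(\alpha, x(1-x))$. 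Once the sequential limit of $\mathbb{E}(f(B_{n, k_n(\alpha_n)}))$ is identified with the expression on the right of (\ref{eqn:cnk_to_G}) — which is continuous in $\alpha$ — Lemma \ref{lem:compact_crit} upgrades pointwise to uniform convergence on $K$.

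The main obstacle is that the naive choice $h_n(w) = f(w/\sqrt{n} + x)$ fails the uniform-convergence-on-compacts hypothesis of Theorem \ref{thrm:bounded_convergence} whenever $f$ jumps at $x$: on every compact neighbourhood of $0$, $h_n$ takes values near both $f(x-)$ and $f(x+)$, so no candidate limit can be approached uniformly. I would circumvent this by splitting $f = f_s + g$, where $f_s$ is the pure jump at $x$,
$$
f_s(t) = f(x-)\bigchi_{(-\infty, x)}(t) + f(x)\bigchi_{\{x\}}(t) + f(x+)\bigchi_{(x, \infty)}(t),
$$
and the remainder $g := f - f_s$ is of bounded variation (hence bounded) with $g(x-) = g(x) = g(x+) = 0$, so $g$ is continuous at $x$.

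For the jump piece, take $h_n^s(w) := f_s(w/\sqrt{n} + x)$; off the point $\{0\}$ this coincides with the fixed step function $h^s(w) := f(x-)\bigchi_{(-\infty, 0)}(w) + f(x+)\bigchi_{(0, \infty)}(w)$. Hence $h_n^s \to h^s$ trivially uniformly on compacts, $\mathcal{D}(h^s) \subset \{0\}$ is closed, and $\mathbb{P}(Y_\alpha = 0) = 0$ because $Y_\alpha$ has a continuous distribution. Theorem \ref{thrm:bounded_convergence} then gives
$$
\mathbb{E}(f_s(B_{n, k_n(\alpha_n)})) \to \mathbb{P}(Y_\alpha < 0)f(x-) + \mathbb{P}(Y_\alpha > 0)f(x+) = \Psi(-\alpha)f(x-) + [1 - \Psi(-\alpha)]f(x+).
$$
For the remainder, take $h_n^g(w) := g(w/\sqrt{n} + x)$ and $h^g \equiv 0$. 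Uniform boundedness follows from boundedness of $g$, while uniform convergence on any compact $K' \subset \mathbb{R}$ follows from continuity of $g$ at $x$ with value $0$: for $n$ large enough the set $x + K'/\sqrt{n}$ lies inside any prescribed neighbourhood of $x$ on which $|g|$ can be made arbitrarily small. Theorem \ref{thrm:bounded_convergence} then yields $\mathbb{E}(g(B_{n, k_n(\alpha_n)})) \to 0$. Summing the two contributions gives the sequential limit, and Lemma \ref{lem:compact_crit} promotes this to uniform convergence in $\alpha$ on compact subsets of $\mathbb{R}$.
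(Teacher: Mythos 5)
Your proof is correct and reaches the conclusion by essentially the same route as the paper: the probabilistic representation (\ref{eqn:Lupas}), Corollary \ref{cor:beta_normal_approx} for the convergence in distribution, Theorem \ref{thrm:bounded_convergence} for the convergence of expectations, and Lemma \ref{lem:compact_crit} to upgrade the sequential limit to uniform convergence on compacts. The one substantive difference is the decomposition $f = f_s + g$, and the obstacle you invoke to justify it is not actually an obstacle. Theorem \ref{thrm:bounded_convergence} does not require the limit $h$ to be continuous --- that is precisely why it carries the hypothesis that $\mathcal{D}(h)$ be closed and null for the limit law --- and uniform convergence to a discontinuous limit is perfectly possible when the approximants $h_n$ are themselves discontinuous, as they are here. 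Indeed the ``naive'' choice $h_n(t) = f(t/\sqrt{n}+x)$ (cut off to $[0,1]$) does converge uniformly on compacts to the step function $h = f(x-)\bigchi_{(-\infty,0)} + f(x)\bigchi_{\{0\}} + f(x+)\bigchi_{(0,\infty)}$: for $t \in [-M,0)$ one has $x + t/\sqrt{n} \in [x - M/\sqrt{n}, x)$, and the existence of the left limit $f(x-)$ forces $\sup_{t \in [-M,0)}|h_n(t) - f(x-)| \rightarrow 0$; likewise on $(0,M]$. This is exactly the paper's argument, so your detour through $f_s$ and $g$ is sound but unnecessary. Two small points in your version: your $h^s$ should be assigned the value $f(x)$ at $0$, since otherwise $h_n^s(0) = f(x) \neq h^s(0)$ and the claimed uniform convergence fails at that single point (the fix is cosmetic, as $\{0\}$ is a null set of $Y_\alpha$ either way); and $g(w/\sqrt{n}+x)$ must be cut off or extended outside $[0,1]$ to define a function on all of $\mathbb{R}$, as the paper does with the indicator $\bigchi_{[-x\sqrt{n},(1-x)\sqrt{n}]}$.
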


\begin{proof} We look to apply Lemma \ref{lem:compact_crit}. Let $K$ be a non-empty compact subset of $\mathbb{R}$. Let $(\alpha_n) \subset K$ such that $\lim_{n \rightarrow \infty} \alpha_n = \alpha$. Clearly, $\lim_{n \rightarrow \infty} \sqrt{n}\left(\frac{k_n(\alpha_n)}{n} - x\right) = \alpha$. Hence, Corollary \ref{cor:beta_normal_approx} yields

\begin{equation}\label{eqn:Bstar_def}
B_{n,k_n(\alpha_n)}^* := \sqrt{n}\left(B_{n,k_n(\alpha_n)} - x \right) \stackrel{D}{\rightarrow} N(\alpha, x(1 - x)) \text{ as } n \rightarrow \infty.
\end{equation}
Using (\ref{eqn:Lupas}), we represent the Lupa\c{s} Beta operator probablistically and get that 
\begin{equation}\label{eqn:half_int_to_Bstar}
    L_n(f)(k_n(\alpha_n)/n) = \mathbb{E}(f(B_{n,k_n(\alpha_n)})) = \mathbb{E}(h_{n}(B_{n,k_n(\alpha_n)}^*)),
\end{equation}
where $$h_n(t) = f\left(\frac{t}{\sqrt{n}} + x\right)\bigchi_{[0, 1]}\left(\frac{t}{\sqrt{n}} + x\right) =  f\left(\frac{t}{\sqrt{n}} + x\right)\bigchi_{[-x\sqrt{n}, (1 - x)\sqrt{n}]}(t),\quad t \in \mathbb{R}.$$ 

From the decomposition
$$\bigchi_{[-x\sqrt{n}, (1-x)\sqrt{n}]} = \bigchi_{[-x\sqrt{n}, 0)} + \bigchi_{\{0\}} + \bigchi_{(0, (1 - x)\sqrt{n}]},$$ we notice $h_n$ limits uniformly to $h = f(x-)\bigchi_{(-\infty, 0)} + f(x)\bigchi_{\{0\}} + f(x+)\bigchi_{(0, \infty)}$ on $K$. Since $f$ is of bounded variation, $(h_n)$ is uniformly bounded. Let $A$ be a random variable with distribution $\N(\alpha, x(1 - x))$. The set of discontinuities $\mathcal{D}(h) \subset \{0\}$ is closed, and is a null set of the distribution of $A$. Hence, equations (\ref{eqn:half_int_to_Bstar}), (\ref{eqn:Bstar_def}) and Theorem \ref{thrm:bounded_convergence} give
\begin{equation}\label{eqn:half_int_to_final}
 L_n(f)(k_n(\alpha_n)/n) = \mathbb{E}(h_n(B_{n,k_n(\alpha_n)}^*)) \stackrel{n \rightarrow \infty}{\longrightarrow} \mathbb{E}(h(A)) = \Psi(-\alpha)f(x-) + [1 - \Psi(-\alpha)]f(x+).
\end{equation}
If we consider the sequence of functions $\left(L_n(f)(k_n(\cdot)/n)\right)_n$ and the converging sequence $(\alpha_n)$, an application of Lemma \ref{lem:compact_crit} on (\ref{eqn:half_int_to_final}) gives that convergence of (\ref{eqn:cnk_to_G}) is uniform in $\alpha$ on $K$. Since $K$ was arbitrary, (\ref{eqn:cnk_to_G}) holds uniformly in $\alpha$ on all compact subsets of $\mathbb{R}$.
\end{proof}

\begin{proof}[Proof of Theorem \ref{thrm:result_stated_twice_first}]
Notice
\begin{equation}\label{eqn:M_nw_to_expected}
M_{n, w}(f)(x) = \sum_{k = 0}^n p_{n,k}(x) c_n(k) = \mathbb{E}(c_n(C_n)),
\end{equation}
where $C_n \sim \Bin(n, x)$ and $$c_n(k) = \frac{L_n(fw)(k/n)}{L_n(w)(k/n)} \text{ if } k \in [0, n], \text{ otherwise } c_n(k) = 0.$$

We transform $C_n$ and $c_n$ as follows. Let $C_n^* = \sqrt{n}\left(\frac{C_n}{n} - x\right)$ and define $c_n^*$ so that $c_n(C_n) = c_n^*(C_n^*)$, that is $c_n^*(\alpha) = c_n(xn + \alpha \sqrt{n})$ for $\alpha \in \mathbb{R}$. Then 
\begin{equation}\label{eqn:M_nw_to_hn}
    M_{n, w}(f)(x) = \mathbb{E}(c_n(C_n)) = \mathbb{E}(c_n^*(C_n^*)).
\end{equation}

We define functions $t_n, b_n$ as follows
\begin{equation}\nonumber
    t_n(\alpha) = L_n(fw)(x + \alpha/\sqrt{n}) \quad \text{and} \quad b_n(\alpha) = L_n(w)(x + \alpha/\sqrt{n})
\end{equation}
for $\alpha \in \mathbb{R}$.

By Lemma \ref{lem:incomplete_beta_approx},
\begin{equation}\label{eqn:tn-limits}
    \lim_{n \rightarrow \infty} t_n(\alpha) = f(x-)w(x-)\Psi(-\alpha) + f(x+)w(x+)(1 - \Psi(-\alpha))
\end{equation}
and
\begin{equation}\label{eqn:bn-limits}
\lim_{n \rightarrow \infty} b_n(\alpha) = w(x-)\Psi(-\alpha) +  w(x+)(1 - \Psi(-\alpha))
\end{equation}
uniformly in $\alpha$ on compact subsets of $\mathbb{R}$.
By (\ref{eqn:tn-limits}) and (\ref{eqn:bn-limits})
\begin{equation}\label{eqn:requiredlimit}
    \lim_{n \rightarrow \infty} c_n^*(\alpha) = \lim_{n \rightarrow \infty} \frac{t_n(\alpha)}{b_n(\alpha)} =  \frac{f(x-)w(x-)\Psi(-\alpha) + f(x+)w(x+)(1 - \Psi(-\alpha))}{w(x-)\Psi(-\alpha) +  w(x+)(1 - \Psi(-\alpha))}
\end{equation}
uniformly on compact subsets of $\mathbb{R}$. Notice $w(x-) \neq 0$ or $w(x+) \neq 0$ is used here to guarantee that on every compact subset of $\mathbb{R}$ the denominator $b_n$ is eventually uniformly lower bounded.

Since $f$ is of bounded variation it is bounded, so $(c_n)$ and $(c_n^*)$ are uniformly bounded. Let $V$ have a $N(0, x(1 - x))$ distribution. By Theorem \ref{thrm:bin_normal_approx}, $(C_n^*)$ converges in distribution to $V$. Equation (\ref{eqn:requiredlimit}) says that $(c_n^*)$ converges uniformly on compact intervals to the right hand side of (\ref{eqn:requiredlimit}) which we shall call $c^*$. Recalling (\ref{eqn:M_nw_to_hn}), an application of Theorem \ref{thrm:bounded_convergence} gives
\begin{equation} \label{eqn:shortcut}
   \lim_{n \rightarrow \infty} M_{n, w}(f)(x) = \lim_{n \rightarrow \infty} \mathbb{E}(c_n^*(C_n^*)) = \mathbb{E}(c^*(V)).
\end{equation}

Expanding out the right hand side gives
\begin{equation}\label{eqn:normal-bernstein-integral}
    \lim_{n \rightarrow \infty}M_{n,w}(f)(x) = \int_{-\infty}^{\infty}\frac{f(x-)w(x-)\Psi(-\alpha) + f(x+)w(x+)(1 - \Psi(-\alpha))}{w(x-)\Psi(-\alpha) +  w(x+)(1 - \Psi(-\alpha))} \, d\Psi(\alpha),
\end{equation} The substitution $u = \Psi(-\alpha)$ in (\ref{eqn:normal-bernstein-integral}) gives (\ref{eqn:weighted_char}).
\end{proof}

\section{Acknowledgements}
Much thanks to Professor E.E. Berdysheva for suggesting this problem, for greatly helping in the preparation of the paper, and for providing reading materials on the history of the Bernstein-Durrmeyer operators. The author also thanks F.C. Rath  for bringing to attention the work of Professor R.J. Ryder \cite{ryder} who the author thanks for providing further references on the normal approximation of the beta distribution. Thank you to the referees for their suggestions --- especially for drawing our attention to the Lupas Beta operator --- and careful reading of this paper.

This work is supported in part by the National Research Foundation of South Africa PMDS22103166345.


\bibliography{reference}

\begin{thebibliography}{10}

\bibitem{BERDYSHEVA2014734}
{\sc Berdysheva, E.~E.}
\newblock Bernstein–{D}urrmeyer operators with respect to arbitrary measure,
  {II}: Pointwise convergence.
\newblock {\em Journal of Mathematical Analysis and Applications 418}, 2
  (2014), 734--752.

\bibitem{kurt}
{\sc Berdysheva, E.~E., and Jetter, K.}
\newblock Multivariate {B}ernstein–{D}urrmeyer operators with arbitrary
  weight functions.
\newblock {\em Journal of Approximation Theory 162}, 3 (2010), 576--598.
\newblock Special Issue: Bommerholz Proceedings.

\bibitem{berens}
{\sc Berens, H., and Xu, Y.}
\newblock On {B}ernstein-{D}urrmeyer polynomials with {J}acobi weights.
\newblock C.~K. Chui, Ed., Academic Press Boston, pp.~25--46.

\bibitem{bernstein}
{\sc Bernstein, S.}
\newblock D\'emonstration du th\'{e}or\`eme de {Weierstrass} fond\'ee sur le
  calcul des probabilit\'es.
\newblock {\em Communications de la Soci\'et\'e math\'ematique de Kharkow.
  2-\'ee s\'erie 13\/} (1912), 1--2.

\bibitem{Erhan}
{\sc {\c{C}}{\i}nlar, E.}
\newblock {\em Probability and Stochastics}.
\newblock Graduate Texts in Mathematics. Springer New York, 2011.

\bibitem{multidimensional}
{\sc Ditzian, Z.}
\newblock Multidimensional {J}acobi-type {B}ernstein-{D}urrmeyer operators.
\newblock {\em Acta Scientiarum Mathematicarum 60}, 1 (1995), 225--244.

\bibitem{durrmeyer}
{\sc Durrmeyer, J.~L.}
\newblock {\em Une formule d'inversion de la transform{\'e}e de Laplace:
  Applications {\`a} la th{\'e}orie des moments}.
\newblock PhD thesis, Facult\'{e} des Sciences de l’Université de Paris,
  1967.

\bibitem{GUO1987183}
{\sc Guo, S.}
\newblock On the rate of convergence of the {D}urrmeyer operator for functions
  of bounded variation.
\newblock {\em Journal of Approximation Theory 51}, 2 (1987), 183--192.

\bibitem{discontbernstein}
{\sc Herzog, F., and Hill, J.~D.}
\newblock The {B}ernstein polynomials for discontinuous functions.
\newblock {\em American Journal of Mathematics 68}, 1 (1946), 109--124.

\bibitem{lupas}
{\sc Lupa{\c{s}}, A.}
\newblock {\em Die Folge der Betaoperatoren}.
\newblock PhD thesis, Universität Stuttgart, 1972.

\bibitem{remmert1991theory}
{\sc Remmert, R., and Burckel, R.}
\newblock {\em Theory of Complex Functions}.
\newblock Graduate Texts in Mathematics. Springer New York, 1991.

\bibitem{ryder}
{\sc Ryder, R.~J.}
\newblock The {B}eta$(b, b)$ distribution converges to the normal distribution
  when $b\rightarrow\infty$.
\newblock Univariate Distribution Relationships, 2013.

\bibitem{Scheffethrm}
{\sc Scheffe, H.}
\newblock {A useful convergence theorem for probability distributions}.
\newblock {\em The Annals of Mathematical Statistics 18}, 3 (1947), 434--438.

\end{thebibliography}

\appendix
\section{Proof of Theorem \ref{thrm:beta_normal_approx}}
\begin{proof}
We follow the proof method used in \cite{ryder} which shows that the $\Beta(n, n)$ goes to the normal distribution as $n \rightarrow \infty$. That is for $\beta_{r_1, r_2} \sim \Beta(r_1, r_2)$, we show that the pdf of $\sqrt{r_1 + r_2}(\beta_{r_1, r_2} - \frac{r_1}{r_1 + r_2})$ converges pointwise to the pdf of the $N(0, \gamma(1 - \gamma))$ distribution as $r_1, r_2 \rightarrow \infty$ and $\frac{r_1}{r_1 + r_2} \rightarrow \gamma$. By Scheffe's Theorem \cite{Scheffethrm}, pointwise convergence of the   pdf implies convergence in distribution.

The beta distribution has pdf $$f_{\beta_{r_1, r_2}}(x) = \frac{\Gamma(r_1 + r_2)}{\Gamma(r_1)\Gamma(r_2)}x^{r_1 - 1}(1 - x)^{r_2 - 1} \quad \text{for } 0 <  x < 1.$$

Using the transformation formula, the pdf of $Y_{r_1, r_2} = \sqrt{r_1 + r_2}(\beta_{r_1, r_2} - \frac{r_1}{r_1 + r_2})$ is
\begin{align*}
f_{Y_{r_1, r_2}}(y) &= \frac{1}{\sqrt{r_1 + r_2}}f_{\beta_{r_1, r_2}}\left( \frac{r_1}{r_1 + r_2} + \frac{y}{\sqrt{r_1 + r_2}}\right)\\
&= \frac{1}{\sqrt{r_1 + r_2}}\frac{\Gamma(r_1 + r_2)}{\Gamma(r_1)\Gamma(r_2)}\left( \frac{r_1}{r_1 + r_2} + \frac{y}{\sqrt{r_1 + r_2}}\right)^{r_1 - 1}\left(\frac{r_2}{r_1 + r_2}-\frac{y}{\sqrt{r_1 + r_2}}\right)^{r_2 - 1},
\end{align*}
for $-\frac{r_1}{\sqrt{r_1 + r_2}} < y < \frac{r_2}{\sqrt{r_1 + r_2}}$.
We show pointwise convergence in the log of the pdf. Note
\begin{align*}
    \ln (f_{Y_{r_1, r_2}}(y)) = &\underbrace{-\ln{\sqrt{r_1 + r_2}} + \ln(\Gamma(r_1 + r_2)) - \ln(\Gamma(r_1)) - \ln(\Gamma(r_2))}_A\\
    &+\underbrace{(r_1 - 1)\ln\left(\frac{r_1}{r_1 + r_2} + \frac{y}{\sqrt{r_1 + r_2}}\right) + (r_2 - 1) \ln\left({\frac{r_2}{r_1 + r_2}-\frac{y}{\sqrt{r_1 + r_2}}}\right)}_B
\end{align*}
Stirling's approximation gives $\ln(\Gamma(n)) = n \ln n - n + \ln\sqrt{2\pi/n} + O(\frac{1}{n})$, $n \rightarrow \infty$. Approximating $A$,
\begin{align*}
    A &= - \ln\sqrt{r_1 + r_2} + (r_1 + r_2)\ln(r_1 + r_2) - (r_1 + r_2) + \ln\sqrt{2\pi/(r_1 + r_2)} + O\left(\frac{1}{r_1 + r_2}\right)\\
    &\quad- \left[ r_1\ln r_1  -r_1 + \ln\sqrt{2\pi/r_1} + O\left(\frac{1}{r_1}\right)\right] - \left[r_2\ln r_2  -r_2 + \ln\sqrt{2\pi/ r_2} + O\left(\frac{1}{r_2}\right)\right]\\
    &= - \ln\sqrt{2\pi} + r_1\ln\frac{r_1 + r_2}{r_1} + r_2\ln\frac{r_1 + r_2}{r_2} +\frac{1}{2}\ln\frac{r_1}{r_1 + r_2} + \frac{1}{2}\ln\frac{r_2}{r_1 + r_2} + O\left(\frac{1}{r_1} + \frac{1}{r_2}\right).
\end{align*}
The limit is pointwise; so fix $y$. Note $\frac{r_1}{r_1 + r_2} \rightarrow \gamma$, so $y\frac{\sqrt{r_1 + r_2}}{r_1}$ and $y\frac{\sqrt{r_1 + r_2}}{r_2}$ limit to 0. Hence, we can use the second order Taylor Series approximation $\ln(1 + x) = x - \frac{x^2}{2} + O(x^3)$ as $x \rightarrow 0$. Approximating $B$,
\begin{align*}
    B &= (r_1 - 1)\left[\ln\frac{r_1}{r_1 + r_2} + \ln(1 + y\frac{\sqrt{r_1 + r_2}}{r_1})\right]\\
    &\quad+  (r_2 - 1)\left[\ln\frac{r_2}{r_1 + r_2} + \ln(1 - y\frac{\sqrt{r_1 + r_2}}{r_2})\right]\\
    &= r_1\ln\frac{r_1}{r_1 + r_2} + y\sqrt{r_1 + r_2} -\frac{1}{2} y^2\frac{r_1 + r_2}{r_1} - \ln\frac{r_1}{r_1 + r_2} + O\left(\frac{1}{\sqrt{r_1}}\right)\\
    &\quad+ r_2\ln\frac{r_2}{r_1 + r_2} - y\sqrt{r_1 + r_2} - \frac{1}{2}y^2 \frac{r_1 + r_2}{r_2} - \ln\frac{r_2}{r_1 + r_2} + O\left(\frac{1}{\sqrt{r_2}}\right).
\end{align*}

Combining approximations for $A$ and $B$ we get 
\begin{align*}
A + B &= \left[ r_1\ln\frac{r_1 + r_2}{r_1} + r_1\ln\frac{r_1}{r_1 + r_2} \right] + \left[r_2\ln\frac{r_1 + r_2}{r_2} + r_2\ln\frac{r_2}{r_1 + r_2} \right] + \left[y\sqrt{r_1 + r_2} - y\sqrt{r_1 + r_2} \right]\\
&\quad+\left[\frac{1}{2}\ln\frac{r_1}{r_1 + r_2}-\ln\frac{r_1}{r_1 + r_2}\right] + \left[\frac{1}{2}\ln\frac{r_2}{r_1 + r_2} - \ln\frac{r_2}{r_1 + r_2} \right] \\
&\quad-\ln\sqrt{2\pi} -\frac{1}{2}y^2\left(\frac{r_1 + r_2}{r_1} +\frac{r_1 + r_2}{r_2}\right)  +  O\left(\frac{1}{\sqrt{r_1}} + \frac{1}{\sqrt{r_2}}\right)\\
&= \quad- \frac{1}{2}\ln\frac{r_1}{r_1 + r_2} - \frac{1}{2}\ln\frac{r_2}{r_1 + r_2} -\ln\sqrt{2\pi} -\frac{1}{2}y^2\left(\frac{r_1 + r_2}{r_1} +\frac{r_1 + r_2}{r_2}\right) + O\left(\frac{1}{\sqrt{r_1}} + \frac{1}{\sqrt{r_2}}\right)\\
&\rightarrow -\frac{1}{2}\ln\gamma - \frac{1}{2}\ln(1-\gamma) -\ln\sqrt{2\pi} - \frac{1}{2}\frac{y^2}{\gamma(1-\gamma)} = \ln\left(\frac{1}{\sqrt{2\pi\gamma(1-\gamma)}}e^{-\frac{1}{2}\frac{y^2}{\gamma(1-\gamma)}}\right),
\end{align*}
as $r_1,r_2 \rightarrow\infty$ and $\frac{r_1}{r_1 + r_2} \rightarrow \gamma$. This is the natural log of the pdf of the $N(0, \gamma(1-\gamma))$ distribution.
\end{proof}
\end{document}